\newcommand{\mf}{\mathfrak}
\newcommand{\mc}{\mathcal}
\newcommand{\mb}{\mathbf}
\newcommand{\R}{\mathbf R}
\newcommand{\C}{\mathbf C}
\newcommand{\Q}{\mathbf Q}
\newcommand{\Z}{\mathbf Z}
\newcommand{\F}{\mathbf F}
\newcommand{\Fix}{\textnormal{Fix}}
\newcommand{\sm}[4]{{
  \bigl[\begin{smallmatrix}{#1}&{#2}\\{#3}&{#4}\end{smallmatrix}\bigr]}}
\numberwithin{equation}{section}
\theoremstyle{plain}
\newtheorem{theorem}{Theorem}[section]
\theoremstyle{definition}
\begin{document}

\title[Value sets of bivariate folding polynomials]{Value sets of bivariate 
folding polynomials \\ over finite fields}
\author{ \"{O}mer K\"{u}\c{c}\"{u}ksakall{\i}}
\address{Middle East Technical University, Mathematics Department, 06800 Ankara,
Turkey.}
\email{komer@metu.edu.tr}

\date{\today}

\begin{abstract}
We find the cardinality of the value sets of polynomial maps associated with 
simple complex Lie algebras $B_2$ and $G_2$ over finite fields. We achieve this 
by using a characterization of their fixed points in terms of sums of roots of 
unity. 
\end{abstract}

\subjclass[2010]{11T06}

\keywords{Lie algebra, Weyl group, fixed point, permutation}

\maketitle

\section*{Introduction}
Let $q$ be a power of a prime $p$. Given a polynomial $f\in\Z[\mb{x}]$ with $n$ 
variables, we write $\bar{f}$ for the induced map over $\F_q$. If $\bar{f}: 
\F_q^n \rightarrow \F_q^n$ is not a bijection, then one may ask how far it is 
away from being a bijection. An approach to investigate this problem is to find 
the cardinality of the value set  $\bar{f}(\F_q^n)$. For an arbitrary polynomial 
map $f\in\Z[\mb{x}]$, there is no easy formula for this quantity. However, there 
are certain families with nice underlying algebraic structures which allow us 
to find the cardinality explicitly. An interesting single variable example is 
the family of Dickson polynomials for which a formula was found by Chou, 
Gomez-Calderon and Mullen 
\cite{chou}.

There is a generalization of Dickson polynomials, or Chebyshev polynomials, to 
several variables introduced by Lidl and Wells. They provide easy to check 
conditions for these functions to induce permutations over finite fields 
\cite{lidlwells}. Lidl and Wells achieve this by using the theory of symmetric 
polynomials together with some basic methods in the theory of finite fields. On 
the other hand, their construction can be related to the simple complex Lie 
algebras $A_n$ \cite{hoffwith}. In general, for an arbitrary Lie algebra 
$\mf{g}$, there is an associated infinite sequence of integrable polynomial 
mappings $P_\mf{g}^k$ determined from the conditions 
\[ \varPhi_\mf{g}(k\mathbf{x})=P_\mf{g}^k(\varPhi_\mf{g}(\mathbf{x})). \]
Here, the components of the vector function $\varPhi_\mf{g}$ are given by  
exponential sums which are obtained by the orbits of the Weyl group of $\mf{g}$.
All coefficients of the polynomials defining $P_\mf{g}^k$ are integers. This  
result was first given by Veselov \cite{veselov}, and somewhat later by 
Hofmann and Withers \cite{hoffwith}, independently. These maps $P_\mf{g}^k$ 
are also referred as folding polynomials \cite{withers}. This is because the 
parameter $k$ acts by folding over the underlying triangular fundamental region 
in the case of a rank two simple complex Lie algebra.

In our previous work \cite{bivariate}, we have provided easy to check conditions 
for the bivariate folding polynomials associated with $B_2$ and $G_2$ to 
induce permutations over finite fields. In this paper, we extend our results, by 
finding the cardinality of the value set for each member in those families, not 
only for the members that give permutations.

The organization of the paper is as follows: In the first section we give three 
examples which illustrate the idea that will be used for the further cases; the 
first example is the power maps which is the most elementary, the other two 
examples are the folding polynomials associated with the Lie algebras $A_1$ and 
$A_2$. In the second and the third sections, we consider the folding polynomials 
associated with $B_2$ and $G_2$, respectively. For each one of these two 
families, we prove a  formula for the cardinality of its value set over finite 
fields.

\section{Motivation}
In this section, we consider the cardinality of the value sets of three basic 
families, namely the power maps and folding polynomials associated with $A_1$ 
and $A_2$. We give alternative proofs of formulas for the cardinality of their 
value sets which will be a motivation for the further cases $B_2$ and $G_2$.

\subsection{The power maps} 
The nonzero elements of $\F_q$ can be parametrized by roots of 
unity. This parametrization is useful while studying the action of 
power maps $P_k(x)=x^k$ on such fields. Let $\mu_k \subset \C$ be the group of 
$k$-th roots of unity. Define $S(k) = \mu_{k}\cup\{0\}$. The set $S(k)$ can be 
described as the set of complex numbers satisfying the equation $x^{k+1}=x$. 
Given a function $f:\C^n \rightarrow \C^n$, we will use the notation 
$\Fix(f)=\{\mb{z}\in\C \mathrel{:} f(\mb{z})=\mb{z}\}$. In this case, we have 
$S(k) = \Fix(P_{k+1})$.

Let $\Q(\Fix(P_q))$ be the number field obtained by adjoining the solutions of 
the equation $x^q=x$, or elements of $\Fix(P_q)$, to rational numbers. Let 
$\mf{p}$ be a prime ideal of $\Q(\Fix(P_q))$ lying over $p$. The elements of 
$\F_q$ can be characterized as solutions of the equation $x^q=x$. Now 
$S(q-1)=\Fix(P_{q})$. Thus, there is a one-to-one correspondence 
\[ \F_q \longleftrightarrow S(q-1) \]
obtained by reducing the elements on the right hand side modulo $\mf{p}$.

The action of $\bar{P}_k(x)$ on the finite field $\F_q$ is compatible with 
the action of $P_k(x)$ on $S(q-1)$. From this point of view, it is now obvious 
that $\bar{P}_k$ induces a permutation of $\F_q$ if and only if $\gcd(k,q-1)=1$. 
Moreover, one can easily find the size of the value set. Set 
$a=(q-1)/\gcd(k,q-1)$. Then, 
\[ \bar{P}_k(\F_q)  = \bar{P}_k\left( \overline{S(q-1)} \right) = \overline{ 
P_k(S(q-1))} = \overline{ S(a) }.\]
The set $S(a) \subset \C$ has $a+1$ elements, namely $a$-th roots of unity 
together with the zero. Their reduction modulo $\mf{p}$ belongs to the finite 
field $\F_q$ and they are distinct. Thus, we have $|\bar{P}_k(\F_q)| = a + 1$.

\subsection{The case A1}
The $k$-th Dickson polynomial $D_k(x)\in\Z[x]$ is uniquely defined by the 
functional equation $D_k(y+y^{-1})=y^k+y^{-k}$. As an alternative approach, one 
can use the Lie algebra $A_1$ in order to produce same family of polynomials. 
Consider
\[\varPhi(\sigma)=\exp(2\pi i \sigma) + \exp(-2\pi i \sigma).\] 
Alternatively, the Dickson polynomial $D_k$ is the unique polynomial satisfying 
the equation $D_k(\varPhi(\sigma))=\varPhi(k\sigma)$. 

Let $K = \Q(\Fix(D_q))$, a number field obtained by adjoining the roots 
of the equation $D_q(x)=x$ to rational numbers. Let $\mf{p}$ be a prime 
ideal of $K$ lying over $p$. It is well known that $D_q(x) \equiv x^q 
\pmod{p}$. Thus, 
there is a one-to-one correspondence
\[ \F_q \longleftrightarrow \Fix(D_q) \]
obtained by reducing the algebraic elements in $\Fix(D_q)$ modulo $\mf{p}$. The 
elements in $\Fix(D_q)$ can be parametrized by $\varPhi$. Note that 
$\varPhi(\sigma)=\varPhi(\tilde{\sigma})$ if and only if $\sigma\equiv \pm 
\tilde{\sigma}\pmod{\Z}$. Moreover, we have $\Fix(D_q) = S_1 \cup S_2$ where
\[ S_1 = \left\{ \varPhi \left( \frac{s}{q-1} \right) \mathrel: s\in\Z  
\right\}\quad \textnormal{ and }\quad S_2 = \left\{ \varPhi \left( 
\frac{s}{q+1} \right) \mathrel: s\in\Z  \right\}.\]

Our purpose is to understand the size of $\bar{D}_k(\F_q)$. It is enough to 
investigate $D_k(S_1 \cup S_2)$ because of the one-to-one correspondence. 
Note that 
\[ \bar{D}_k(\F_q)  = \bar{D}_k\left(\, \overline{S_1 \cup S_2}\, \right) = 
\overline{ D_k(S_1 \cup S_2)}.\]
It is easy to see that
\[ D_k(S_1 \cup S_2) = \left\{ \varPhi \left( \frac{ks}{q-1} \right) 
\mathrel: s\in\Z  \right\} \cup \left\{ \varPhi \left( \frac{ks}{q+1} \right) 
\mathrel: s\in\Z  \right\}. \]
In order to find the precise number of elements in this union, one needs to be 
careful with possible common elements $\varPhi(0)$ and $\varPhi(1/2)$. The 
following result is a special case of a formula which was first established by 
Chou, Gomez-Calderon and Mullen \cite{chou}. A corollary of this theorem is the 
well known criterion, $\gcd(k,q^2-1)=1$, for the Dickson polynomials 
$\bar{D}_k$ being a permutation of $\F_q$.

\begin{theorem} 
Let $k$ be a positive integer. Set
\[ a=\frac{q-1}{\gcd(q-1,k)} \quad \textnormal{and} \quad 
b=\frac{q+1}{\gcd(q+1,k)}. \]
Then the cardinality of the value set is
\[ |\bar{D}_k(\F_q)| = \frac{a}{2} + \frac{b}{2} + \eta(k,q) \] 
where 
\[\eta(k,q) =\left\{\begin{array}{cl}
0 & \textnormal{ if } \gcd(a,2)=\gcd(b,2),\\
1/2 & \textnormal { if } \gcd(a,2) \neq \gcd(b,2).
\end{array}\right.\]
\end{theorem}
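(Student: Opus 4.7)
The plan is to use the bijection $\F_q \leftrightarrow \Fix(D_q) = S_1 \cup S_2$ established above, which reduces the problem to computing the size of the complex subset $D_k(S_1 \cup S_2) = T_1 \cup T_2$, where
\[ T_1 = \left\{\varPhi\left(\tfrac{ks}{q-1}\right) : s \in \Z\right\}, \qquad T_2 = \left\{\varPhi\left(\tfrac{ks}{q+1}\right) : s \in \Z\right\}. \]
I would compute $|T_1|$, $|T_2|$, and $|T_1 \cap T_2|$ separately and combine them by inclusion--exclusion.

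First I would count $|T_1|$ by observing that, as $s$ varies in $\Z$, the argument $ks/(q-1) \pmod{\Z}$ traces out the cyclic subgroup of $\Q/\Z$ of order $a$, so $T_1 = \{\varPhi(j/a) : 0 \leq j \leq a-1\}$. The identification $\varPhi(\sigma) = \varPhi(-\sigma)$ pairs $j$ with $a-j$, with $j=0$ always fixed and $j = a/2$ fixed precisely when $a$ is even; hence $|T_1| = \lfloor a/2 \rfloor + 1$, and by symmetry $|T_2| = \lfloor b/2 \rfloor + 1$.

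Next I would study the intersection using the fact that $\gcd(a,b)$ divides $\gcd(q-1, q+1) \in \{1, 2\}$. If $\gcd(a,b) = 1$, then any coincidence $\varPhi(j/a) = \varPhi(j'/b)$ leads via $jb \equiv \pm j'a \pmod{ab}$ to $a \mid j$ and $b \mid j'$, forcing $j = j' = 0$; thus $T_1 \cap T_2 = \{\varPhi(0)\}$. If instead $\gcd(a,b) = 2$---which occurs exactly when both $a$ and $b$ are even---the analogous argument after writing $a = 2a'$, $b = 2b'$ with $\gcd(a', b') = 1$ picks out $(j, j') \in \{(0,0), (a/2, b/2)\}$, yielding $T_1 \cap T_2 = \{\varPhi(0), \varPhi(1/2)\} = \{2, -2\}$. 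Substituting into $|T_1 \cup T_2| = |T_1| + |T_2| - |T_1 \cap T_2|$ and splitting by the parities of $a$ and $b$ recovers the formula $a/2 + b/2 + \eta(k, q)$; the fractional correction $\eta = 1/2$ appears precisely in the mixed-parity case, where $\lfloor a/2 \rfloor + \lfloor b/2 \rfloor$ differs from $a/2 + b/2$ by $1/2$.

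The step I expect to be the main obstacle is the intersection analysis, since one must carefully rule out accidental coincidences of the form $\varPhi(j/a) = \varPhi(j'/b)$ beyond the obvious ones at $\varPhi(0)$ and $\varPhi(1/2)$. The bound $\gcd(a,b) \leq 2$ is what keeps this case analysis finite; the remainder is routine parity bookkeeping.
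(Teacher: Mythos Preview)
Your proposal is correct and follows essentially the same approach as the paper: reduce to counting $D_k(S_1)\cup D_k(S_2)$ in $\C$, use $\gcd(a,b)\mid\gcd(q-1,q+1)\le 2$ to control the overlap, and finish by a parity case split. The only cosmetic difference is that the paper organizes the count via an ``interior/end point'' dichotomy (the end points being exactly $\varPhi(0)$ and $\varPhi(1/2)$), a framing chosen to generalize to the higher-rank cases, whereas you phrase the same computation as direct inclusion--exclusion on $|T_1|+|T_2|-|T_1\cap T_2|$.
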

\begin{proof}
Our strategy is to separate this counting problem into two parts. Note that it 
is enough to consider $\varPhi(\sigma)$ with $0\leq \sigma < 1$ to represent 
any element in $\Fix(D_q)$. There are two elements, namely $\varPhi(0)$ and 
$\varPhi(1/2)$, whose representations are unique. Each other element is 
represented with precisely two different expressions, namely $\varPhi(\sigma)$ 
and $\varPhi(1-\sigma)$. Note that $\varPhi(\R)=[-2,2]$ and the elements 
$\varPhi(0)$ and $\varPhi(1/2)$ are the endpoints of this interval. We use this 
geometric interpretation to separate these two distinct types of elements as 
interior points and end points.

The following table gives the number of elements of each type in $D_k(S_1)$ and 
$D_k(S_2)$, respectively.
\[\begin{array}{|c|c|c|} \hline
× & \text{Interior} & \text{End} \\ \hline
D_k(S_1) & (a-\gcd(a,2))/2 & \gcd(a,2) \\ \hline
D_k(S_2) & (b-\gcd(b,2))/2 & \gcd(b,2) \\ \hline
\end{array}\]

Note that the set of interior elements of $D_k(S_1)$ and $D_k(S_2)$ are 
disjoint since $\gcd(a,b)$, which is a divisor of $\gcd(q-1,q+1)$, is either 
one or two. However this is not the case for the end points. The point 
$\varPhi(0)$, possibly $\varPhi(1/2)$, belongs to each one of these two sets. 

Now we are ready to establish the formula in the theorem. Suppose that 
$\gcd(a,2)=\gcd(b,2)$. Then, we have
\[ |\bar{D}_k(\F_q)| = \frac{a-\gcd(a,2)}{2} + \frac{b-\gcd(a,2)}{2} + (a,2) 
= \frac{a}{2} + \frac{b}{2}.\]

Suppose that $\gcd(a,2)\neq\gcd(b,2)$. Then $\{\gcd(a,2),\gcd(b,2)\}=\{1,2\}$. 
In this case, we have
\[ |\bar{D}_k(\F_q)| = \frac{a}{2} + \frac{b}{2} - \frac{1}{2} - \frac{2}{2} + 
2 = \frac{a}{2} + \frac{b}{2} + \frac{1}{2}.\]
\end{proof}

\subsection{The case A2}
The main result of this part, namely Theorem~\ref{A2main}, was first proved 
in \cite{bicheby}. For the convenience of the reader, we will summarize the 
main notions adapted to the terminology of Lie algebras. Then we will give 
an elaborated proof of Theorem~\ref{A2main} which will be a motivation for the 
further cases $B_2$ and $G_2$. 

Let $\{\alpha_1, \alpha_2\}$ be a choice of simple roots for the Lie algebra 
$A_2$ with Cartan matrix 
$\sm{2}{-1}{-1}{2}.$
The transpose of this matrix, which is itself, transforms the fundamental 
weights, say $\omega_1$ and $\omega_2$, into the fundamental roots. We have
$\alpha_1=2\omega_1-\omega_2$ and $\alpha_2=-\omega_1+2\omega_2$. The orbit of 
$\omega_1$, under the action of the Weyl group, is $\{ \omega_1, 
\omega_2-\omega_1,-\omega_2 \}$. Similarly, the orbit of $\omega_2$ is $\{ 
\omega_2, \omega_1-\omega_2,-\omega_1 \}$. Set $\sigma:=\omega_2$ and 
$\tau=\omega_1-\omega_2$. With this new choice, the orbits appear simpler. More 
precisely, we have 
$\{\sigma, \tau, -\sigma-\tau\}$ and $\{-\sigma, -\tau, \sigma+ \tau\}$. One can 
consider $\varPhi=(\varphi_1,\varphi_2)$ with
\begin{align*}
 \varphi_1&=e^{2\pi i \sigma} + e^{2\pi i \tau} + e^{-2\pi i (\sigma+\tau)},\\
 \varphi_2 &= e^{-2\pi i\sigma}+e^{-2\pi i\tau}+e^{2\pi i (\sigma+\tau)}.
\end{align*}

Observe that $\varPhi(\sigma,\tau)$ is equal to any one of the following six 
expressions below which are given by the elements of the Weyl group: 
\[\begin{array}{cc|cc|cc}
\text{I} &\varPhi(\sigma,\tau) & \text{II} &\varPhi(\sigma,-\sigma-\tau) & 
\text{III} &\varPhi(\tau,-\sigma-\tau) \\
\text{IV} &\varPhi(\tau,\sigma) &\text{V} &\varPhi(-\sigma-\tau,\sigma) & 
\text{VI} &\varPhi(-\sigma-\tau,\tau).\\
\end{array}\]
Under these symmetries, the region $0 \leq \sigma, \tau < 1$ is separated into 
six parts, possibly having two components, which are mutually congruent to each 
other under the action of the Weyl group. We choose one of them as $R_{A_2}$. 
See Figure~\ref{fig:RA2}.
\begin{figure}[htbp]
    \centering
 \includegraphics[scale=0.8]{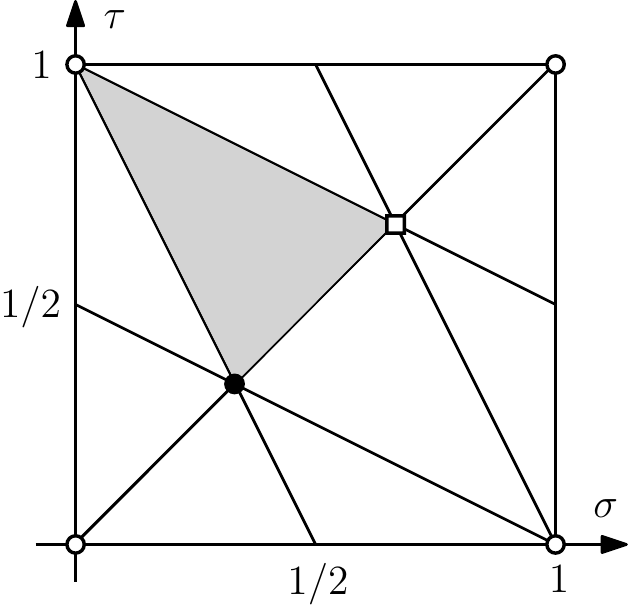}
    \caption{The fundamental region $R_{A_2}$.}
    \label{fig:RA2}
\end{figure}

The family of folding polynomials $\mc{A}_k$ satisfy the conditions 
\[\mc{A}_k(\varPhi(\sigma,\tau)) = \varPhi(k\sigma,k\tau).\]
We want to understand $\Fix(\mc{A}_q)$ in terms of $\varPhi(\cdot,\cdot)$. A 
fixed point of $\mc{A}_q$ is of the form $\varPhi(\sigma,\tau)$ where 
$(q\sigma,q\tau) \equiv w(\sigma,\tau) \pmod{\Z^2}$ for some $w$ in the Weyl 
group of the Lie algebra $A_2$. Using this setup, it is easy to show that 
$\Fix(\mc{A}_q)= \bigcup S_i$ where
\begin{align*}
 S_1 &= \left\{ \varPhi \left( \frac{s}{q-1},\frac{t}{q-1}\right) \mathrel: 
s,t\in\Z  \right\}, \\
 S_2 &= \left\{ \varPhi \left( \frac{s}{q^2-1},\frac{sq}{q^2-1} \right) 
\mathrel: s\in\Z  \right\}, \\
 S_3 &= \left\{ \varPhi \left( \frac{s}{q^2+q+1},\frac{sq}{q^2+q+1} \right) 
\mathrel: s\in\Z  \right\}.
\end{align*}

Let $K = \Q(\Fix(\mc{A}_q))$, a number field obtained by adjoining the 
solutions of $\mc{A}_q(x,y)=(x,y)$ to rational 
numbers. Let $\mf{p}$ be a prime ideal of $K$ lying over $p$. There is a 
one-to-one correspondence
\[ \F_q^2 \longleftrightarrow \Fix(\mc{A}_q) \]
obtained by reducing the elements on the right hand side modulo $\mf{p}$. 
Moreover this correspondence is compatible with the actions of $\bar{\mc{A}}_k$ 
and $\mc{A}_k$ on $\F_q^2$ and $\Fix(\mc{A}_q)$, respectively. Our purpose is to 
understand the size of
\[ \bar{\mc{A}}_k(\F_q^2)  = \bar{\mc{A}}_k\left(\, \overline{S_1 \cup S_2 \cup 
S_3}\, \right) = \overline{ \mc{A}_k(S_1) \cup \mc{A}_k(S_2) \cup 
\mc{A}_k(S_3)}.\]
In order to find the cardinality of the value set, it is enough to investigate 
the set of complex numbers $\bigcup \mc{A}_k(S_i)$.

\begin{theorem}\label{A2main}
Let $k$ be a positive integer. Set
\[a=\frac{q-1}{\gcd(k,q-1)},\ \ b=\frac{q^2-1}{\gcd(k,q^2-1)}\ \ \textnormal{ 
and }\ \ c=\frac{q^2+q+1}{\gcd(k,q^2+q+1)}.\] 
Then the cardinality of the value set is 
\[|\bar{\mc{A}}_k(\F_q^2)|=\frac{a^2}{6}+\frac{b}{2}+\frac{c}{3}+\eta(k,q)\]
where $\eta(k,q)$ is given by
\[\begin{array}{|c|c|c|} \hline
\eta(k,q) & 3\nmid k \textnormal{ or } 3\nmid a & 3\mathrel{|}k \textnormal{ and 
} 3\mathrel{|}a\\ 
\hline
2 \nmid k \textnormal{ or } 2\nmid b & 0 & 2/3\\ \hline
 2\mathrel{|}k \textnormal{ and } 2\mathrel{|} b & a/2 & a/2+2/3 \\ \hline
 \end{array}\]
In particular if $\gcd(k,6)=1$, then $\eta(k,q)=0$.
\end{theorem}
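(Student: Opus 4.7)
The plan is to mimic the blueprint used for the $A_1$ case: identify each image set $\mc{A}_k(S_i)$ with a set of Weyl orbits in a finite subgroup of the torus $\R^2/\Z^2$ via $\varPhi$, count those orbits using the fundamental region $R_{A_2}$ of Figure~\ref{fig:RA2}, and then combine the three contributions by inclusion--exclusion, paying careful attention to overlaps and to torus points whose Weyl stabilizer is nontrivial.

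First I would treat each $S_i$ separately. For $S_1$ the parameter set in $\R^2/\Z^2$ is $\tfrac{1}{a}\Z^2$, of cardinality $a^2$, and the full Weyl group $W\cong S_3$ of $A_2$ acts via the six symmetries I--VI. A generic orbit has length six, giving the main term $a^2/6$; a Burnside count over the three reflections (which fix the walls $\sigma=\tau$, $2\tau\equiv-\sigma$ and $2\sigma\equiv-\tau$) and over the two order-three rotations (which fix $(0,0),(1/3,1/3),(2/3,2/3)$) produces extra contributions proportional to $a$ and to $\gcd(a,3)$. For $S_2$ the parameters lie on the line $\tau\equiv q\sigma$ with $\sigma\in\tfrac{1}{b}\Z/\Z$, and the order-two Weyl element $(\sigma,\tau)\mapsto(\tau,\sigma)$ preserves this line because $q^2\equiv 1\pmod{q^2-1}$, so the generic orbit has length two and the main term is $b/2$. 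For $S_3$ the analogous line is stabilized by the order-three Coxeter element $(\sigma,\tau)\mapsto(\tau,-\sigma-\tau)$ because $q^2\equiv -q-1\pmod{q^2+q+1}$, giving a main term $c/3$.

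Next I would compute the pairwise intersections of the three image sets. Since $\gcd(q-1,q+1)\mid 2$, $\gcd(q-1,q^2+q+1)\mid 3$ and $\gcd(q+1,q^2+q+1)=1$, any common representative $\varPhi(\sigma,\tau)$ must come from a torus point of order dividing $2$ or $3$. The origin $\varPhi(0,0)$ lies in every $\mc{A}_k(S_i)$ unconditionally. The nontrivial 2-torsion representatives appear in $\mc{A}_k(S_1)\cap\mc{A}_k(S_2)$ precisely when $2\mid a$ and $2\mid b$, and the Coxeter-fixed pair $(1/3,1/3),(2/3,2/3)$ appears in $\mc{A}_k(S_1)\cap\mc{A}_k(S_3)$ precisely when $3\mid a$ and $3\mid c$ (equivalently when $3\mid k$). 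The triple intersection contributes only the origin. These two divisibility conditions are what produce the four cases in the table for $\eta(k,q)$.

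The main obstacle is the simultaneous bookkeeping of non-generic orbits and intersections: a single low-order torus point may sit in two or three of the $S_i$ at once with a reduced Weyl orbit length, so the three principal terms $a^2/6$, $b/2$, $c/3$ must first be corrected for their own non-generic orbits and only then any common points subtracted once. I expect that organising these corrections in a single table indexed by the divisibility of $k$ by $2$ and by $3$ will reproduce the formula for $\eta(k,q)$ stated. The last assertion, that $\eta(k,q)=0$ when $\gcd(k,6)=1$, then drops out immediately because in that case $a$, $b$, $c$ are all coprime to $6$ so no exceptional orbit arises.
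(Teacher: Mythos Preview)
Your overall strategy of counting Weyl orbits in each $\mc{A}_k(S_i)$ and assembling by inclusion--exclusion matches the paper's, which phrases it as a classification into interior, edge, and corner points of $R_{A_2}$. The gap is in your intersection analysis. You claim that common points among the $\mc{A}_k(S_i)$ arise only from torus points of order dividing $2$ or $3$, invoking $\gcd(q-1,q+1)\mid 2$. But $q+1$ is not the denominator of any $S_i$ here: $S_2$ is indexed by $q^2-1$, and since $q-1\mid q^2-1$ the lattices behind $S_1$ and $S_2$ are far from coprime. Concretely, every edge point of $\mc{A}_k(S_1)$---there are $a-\gcd(a,3)$ of them, of the form $\varPhi(s/a,s/a)$---also lies in $\mc{A}_k(S_2)$, because $a\mid q-1$ gives $s\equiv sq\pmod a$ and hence $\varPhi(s/a,s/a)=\varPhi(s/a,sq/a)$. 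Thus $\mc{A}_k(S_1)\cap\mc{A}_k(S_2)$ has size of order $a$, not $O(1)$, and an inclusion--exclusion restricted to low-order torsion cannot produce the $a/2$ term in $\eta(k,q)$.

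The paper's remedy is structural: it shows that the \emph{interior} parts of the three image sets are pairwise disjoint, and that $\mc{A}_k(S_2)$ already contains \emph{all} edge and corner points of the union. One then adds the three interior counts and, separately, the edge and corner counts taken once from $\mc{A}_k(S_2)$; the $a/2$ correction appears because the number of edge points in $\mc{A}_k(S_2)$ is $\gcd(q-1,b)-\gcd(a,3)$, with $\gcd(q-1,b)$ equal to $a$ or $2a$ according to the row of the table. (A separate minor slip: $\gcd(k,6)=1$ does not force $a,b,c$ coprime to $6$---take $k=1$ with $q$ odd. The reason $\eta=0$ in that case is simply that $2\nmid k$ and $3\nmid k$ already place you in the top-left cell.)
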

\begin{proof}
Our strategy is to separate the problem into three parts. We will consider 
points in the interior, on the edge and at the corners separately.

A point $\varPhi(\sigma,\tau)$, with $0 \leq \sigma, \tau < 1$, is a corner 
point if and only if it is of the form $\varPhi(0,0), \varPhi(1/3,1/3)$ or
$\varPhi(2/3,2/3)$.

A point $\varPhi(\sigma,\tau)$ is an edge point if it is given by a pair
$(\sigma,\tau)$ that is on the boundary of $R_{A_2}$ except the corners. An 
edge point can be expressed in one of the forms $\varPhi(\sigma,\sigma)$ 
$\varPhi(\sigma, -2\sigma)$ or $\varPhi(-2\sigma, \sigma)$. 

A point $\varPhi(\sigma,\tau)$ is an interior point if it is given by a pair
$(\sigma,\tau)$ that is in the interior of $R_{A_2}$. There are exactly six 
distinct representations given by I, II, III, IV, V and VI, when the components 
are considered modulo integers.  

We have the following table for the number of special types of points in each 
one of the sets $\mc{A}_k(S_i)$.
\[\begin{array}{|l|c|c|c|} \hline
× & \text{Interior} & \text{Edge} & \text{Corner}\\ \hline
\mc{A}_k(S_1) & (a^2-3a+2(a,3))/6 & a-\gcd(a,3)&\gcd(a,3)\\ \hline
\mc{A}_k(S_2) & (b-\gcd(q-1,b))/2  & \gcd(q-1,b)-\gcd(a,3) & \gcd(a,3) \\ \hline
\mc{A}_k(S_3) & (c-\gcd(c,3))/3  & 0 & \gcd(c,3) \\ \hline
\end{array}\]

We start with explaining the entries in the first column. The elements in 
$\mc{A}_k(S_1)$ are of the form $\varPhi(s/a,t/a)$ for some integers $s$ and 
$t$. There are $a^2$ pairs $(s/a,t/a)$ with $0\leq s,t < a$ and as a result 
there are roughly $a^2/6$ interior points in $\mc{A}_k(S_1)$. In order to find 
the precise number, we need to exclude pairs giving edge and corner points. 
For each $0\leq s <a$, the pairs $(s/a,s/a), (s/a,-2s/a)$ and $(-2s/a,s/a)$ 
give rise to an edge or a corner point. Observe that, if $3|a$, then the 
choices $s=a/3$ and $s=2a/3$ give rise to corner points $\varPhi(1/3,1/3)$ and 
$\varPhi(2/3,2/3)$, respectively. Thus, we find the following number:
\[ a^2 - 3(a-\gcd(a,3)) - \gcd(a,3) = a^2-3a+2\gcd(a,3).\]
Note that this number is divisible by six for each choice of $a$. This 
justifies the top entry in the first column. 

Secondly, we consider the interior points in $\mc{A}_k(S_2)$. The elements in 
$\mc{A}_k(S_2)$ are of the form $\varPhi(s/b,sq/b)$ for some integer $s$. There 
are $b$ such pairs with $0\leq s < b$. Unlike the previous case, an 
interior point of this form has only two representations, namely 
$\varPhi(s/b,sq/b)$ and $\varPhi(sq/b,s/b)$ where $sq$ is considered modulo 
$b$. This is because the multiplicative order of $q$ modulo $b$ is a divisor of 
$2$. Note that $s \equiv sq \pmod{b}$ if and only if $s$ is a multiple of 
$b/\gcd(q-1,b)$. The number of such multiples is $\gcd(q-1,b)$, each one of 
which gives an edge or a corner point. Thus, the number of pairs giving an 
interior point is equal to
\[ b-\gcd(q-1,b). \]
Note that this number is divisible by two for each choice of $b$. This 
justifies the middle entry in the first column. 

Next, we consider the interior points in $\mc{A}_k(S_3)$. It is clear that 
the multiplicative order of $q$ modulo $c$ is a divisor of three. If the order 
is one, then this means that $c=1$ or $c=3$. In such a case, we have a corner 
point. Otherwise, a generic point has three distinct expressions, namely
\[ \varPhi\left(\frac{s}{c},\frac{sq}{c}\right), \quad 
\varPhi\left(\frac{sq}{c},\frac{sq^2}{c}\right)\quad \text{and}\quad
\varPhi\left(\frac{sq^2}{c},\frac{s}{c}\right).\]
This proves the bottom entry in the first column. 

As we finish the discussion for the interior points, we also note that the set 
of interior points of $\mc{A}_k(S_i)$ are pairwise disjoint. Firstly, 
it is clear that the intersection of $\mc{A}_k(S_3)$ with $\mc{A}_k(S_1)$ 
consists of corner points only. This is because of the fact that 
$\gcd(q-1,q^2+q+1)$ is a divisor of $3$. A similar argument holds for the 
intersection $\mc{A}_k(S_3)$ with $\mc{A}_k(S_2)$ because 
$\gcd(q^2-1,q^2+q+1)$ is a divisor of three, too. Finally, suppose that $\alpha 
\in \mc{A}_k(S_1) \cap \mc{A}_k(S_2)$. Then
\[\alpha = \varPhi\left(\frac{s}{q-1},\frac{sq}{q-1}\right) = 
\varPhi\left(\frac{s}{q-1},\frac{s}{q-1}\right)\]
for some integer $s$. Thus, $\alpha$ is not an interior point.

Now we consider the edge and corner points. We claim that $\mc{A}_k(S_2)$ 
contains all possible edge and corner points. For example, if we pick a point 
$\alpha\in\mc{A}_k(S_1)$, then it is of the form $\alpha=\varPhi(s/a,t/a)$. 
If $\alpha$ is an edge point, then we must have $s \equiv t\pmod{a}$. It 
follows that $t \equiv sq \pmod{a}$ since $a$ is a divisor of $q-1$. Therefore, 
$\alpha = \varPhi(s/a,sq/a)$ and as a result $\alpha$ is an element of 
$\mc{A}_k(S_2)$. The other parts of the claim can be verified easily, and 
therefore omitted.

We use the following implications in order establish the formulas for 
$\eta(k,q)$ in the theorem:
\begin{enumerate}
 \item $2 \nmid k$ or $2\nmid b\Rightarrow \gcd(q-1,b)=a$
 \item $2 \mathrel{|} k$ and $2 \mathrel{|} b\Rightarrow \gcd(q-1,b)=2a$
 \item $3 \nmid k$ or $3\nmid a\Rightarrow \gcd(c,3)=\gcd(a,3)$
 \item $3 \mathrel{|} k$ and $3 \mathrel{|} a\Rightarrow \gcd(c,3)=1$ and 
$\gcd(a,3)=3$
\end{enumerate}

In order to establish the entry $a/2+2/3$ for $\eta(k,q)$ in the 
statement of the theorem, we shall use the implications (2) and (4). If (2) and 
(4) hold, then the cardinality of the value set is 
\[ \left( \frac{a^2-3a+2\cdot3}{6}+  \frac{b-2a}{2} + \frac{c-1}{3} \right) 
+\left( 2a-3 \right) + \left(3\right). \]
This is the number of elements in the union $\bigcup \mc{A}_k(S_i)$ obtained as 
a sum of the number of interior, edge and corner elements, respectively. This 
finishes the proof of the fact that $\eta(k,q)=a/2+2/3$. The proof of the other 
cases are similar.
\end{proof}

\section{The case B2}
Unless otherwise stated or proved, the assertions of this section can be found 
in \cite{bivariate}. For the convenience of the reader, we will summarize the 
main notions. Then we will prove the 
main result of this section, see Theorem~\ref{B2main}. 

Let $\{\alpha_1, \alpha_2\}$ be a choice of simple roots for the Lie algebra 
$B_2$ with Cartan matrix $\sm{2}{-2}{-1}{2}$. 
The transpose of this matrix transforms the fundamental weights into the 
fundamental roots. We have $\alpha_1=2\omega_1-\omega_2$ and 
$\alpha_2=-2\omega_1+2\omega_2$. The orbit of $\omega_1$, under the action of 
the Weyl group, is $\{ \pm \omega_1, \pm(2\omega_2-\omega_1) \}$. 
Similarly, the orbit of $\omega_2$ is $\{ \pm \omega_2, \pm(\omega_1-\omega_2) 
\}$. We set $\sigma:=\omega_2$ and $\tau=\omega_1-\omega_2$. With this new 
choice, the orbits appear simpler. More precisely, we have $ \{ \pm \sigma, \pm 
\tau  \} $ and  $ \{ \pm (\sigma+\tau), \pm (\sigma- \tau) \}$. One can 
consider $\varPhi=(\varphi_1,\varphi_2)$ with
\begin{align*}
 \varphi_1&=e^{2\pi i \sigma} + e^{-2\pi i \sigma} + e^{2\pi i 
\tau} + e^{-2\pi i \tau}\\
 \varphi_2 &= e^{2\pi i (\sigma+\tau)} + e^{-2\pi i (\sigma+\tau)} + e^{2\pi i 
(\sigma-\tau)} 
+ e^{2\pi i (\tau-\sigma)}
\end{align*}

Observe that $\varPhi(\sigma,\tau)$ is equal to any one of the following eight 
expressions below which are given by the elements of the Weyl group: 
\[\begin{array}{cc|cc|cc|cc}
\text{I} &\varPhi(\sigma,\tau) & \text{II} &\varPhi(\sigma,-\tau) & \text{III} 
&\varPhi(-\sigma,\tau) & \text{IV} &\varPhi(-\sigma,-\tau) \\
\text{V} &\varPhi(\tau,\sigma) &\text{VI} &\varPhi(-\tau,\sigma) & \text{VII} 
&\varPhi(\tau,-\sigma) & \text{VIII} &\varPhi(-\tau,-\sigma)\\
\end{array}\]
Under these symmetries, the region $0 \leq \sigma, \tau < 1$ is separated into 
eight triangles which are mutually congruent to each other under the action of 
the Weyl group. We choose one of them as $R_{B_2}$. See Figure~\ref{fig:RB2}.

\begin{figure}[htbp]
    \centering
 \includegraphics[scale=0.8]{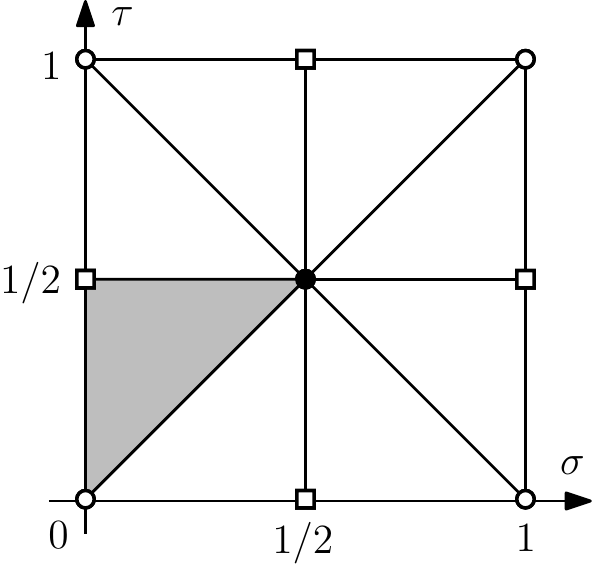}
    \caption{The fundamental region $R_{B_2}$.}
    \label{fig:RB2}
\end{figure}

The family of folding polynomials $\mc{B}_k$ satisfy the conditions 
\[\mc{B}_k(\varPhi(\sigma,\tau)) = \varPhi(k\sigma,k\tau).\]
We want to understand $\Fix(\mc{B}_q)$ in terms of $\varPhi(\cdot,\cdot)$. A 
fixed point of $\mc{B}_q$ is of the form $\varPhi(\sigma,\tau)$ where 
$(q\sigma,q\tau) \equiv w(\sigma,\tau) \pmod{\Z^2}$ for some $w$ in the Weyl 
group of $B_2$. Using this setup, it is not hard to show that $\Fix(\mc{A}_q)= 
\bigcup S_i$ where
\begin{align*}
 S_1 &= \left\{ \varPhi \left( \frac{s}{q-1},\frac{t}{q+1}\right) \mathrel: 
s,t\in\Z  \right\}, \\
 S_2 &= \left\{ \varPhi \left( \frac{s}{q-1},\frac{t}{q-1}\right) \mathrel: 
s,t\in\Z  \right\}, \\
 S_3 &= \left\{ \varPhi \left( \frac{s}{q+1},\frac{t}{q+1}\right) \mathrel: 
s,t\in\Z  \right\}, \\
 S_4 &= \left\{ \varPhi \left( \frac{s}{q^2-1},\frac{sq}{q^2-1} \right) 
\mathrel: s\in\Z  \right\}, \\
 S_5 &= \left\{ \varPhi \left( \frac{s}{q^2+1},\frac{sq}{q^2+1} \right) 
\mathrel: s\in\Z  \right\}.
\end{align*}

Let $K = \Q(\Fix(\mc{B}_q))$, a number field obtained by adjoining the solutions 
of $\mc{B}_q(x,y)=(x,y)$ to rational numbers. 
Let $\mf{p}$ be a prime ideal of $K$ lying over $p$. There is a one-to-one 
correspondence
\[ \F_q^2 \longleftrightarrow \Fix(\mc{B}_q) \]
obtained by reducing the elements on the right hand side modulo $\mf{p}$. 
Moreover this correspondence is compatible with the actions of $\bar{\mc{B}}_k$ 
and $\mc{B}_k$ on $\F_q^2$ and $\Fix(\mc{B}_q)$, respectively. Our purpose is 
to understand the size of
\[ \bar{\mc{B}}_k\left(\F_q^2\right)  = \bar{\mc{B}}_k\left( \overline{\bigcup 
S_i} 
\right) = \overline{ \bigcup \mc{B}_k(S_i) }.\]
Here $i$ runs from $1$ to $5$. In order to find the cardinality of the value 
set, it is enough to investigate the set of complex numbers $\bigcup 
\mc{B}_k(S_i)$.

\begin{theorem}\label{B2main}
Let $k\geq 1$ be an integer. Set
\[a=\frac{q-1}{(q-1,k)},\ \ 
b=\frac{q+1}{(q+1,k)},\ \ 
c=\frac{q^2-1}{(q^2-1,k)}\quad \textnormal{and}\quad
d=\frac{q^2+1}{(q^2+1,k)}.\] 
Then the cardinality of the value set is 
\[\left|\bar{\mc{B}}_k\left(\F_q^2\right)\right|=\frac{(a+b)^2}{8}+\frac{c+d}{4}
+\eta(k , q)\]
where
\[\eta(k,q)=
\left\{\begin{array}{lll} 
0 & \text{ if } & 2\nmid k \text{ or } 2\nmid c \\
(a+b)/2+1/8 & \text{ if } & 2\mathrel{|}k \text{ and } 2\mathrel{|}c \text{ and 
} 
2\mathrel{|}ab \\
(a+b)/4+1/4 & \text{ if } & 2\mathrel{|}k \text{ and } 2\mathrel{|}c \text{ and 
} 2\nmid ab \\
\end{array}\right.\]
\end{theorem}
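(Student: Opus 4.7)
The plan is to follow the strategy of the proof of Theorem~\ref{A2main}: classify each element of $\Fix(\mc{B}_q)$ according to whether its pre-image $(\sigma,\tau)$ is an interior point, an edge point, or a corner point of $R_{B_2}$, count the contribution of each type from the five sets $\mc{B}_k(S_i)$ separately, and then assemble the results. Since the Weyl group of $B_2$ has order $8$, a generic interior point of $R_{B_2}$ has $8$ distinct expressions, an edge point (fixed by a single reflection) has $4$, and each of the three corners of the triangular region is a singleton orbit. I would first pin down the three corners $\varPhi(0,0), \varPhi(1/2,0), \varPhi(1/2,1/2)$ and the three edges $\tau=0$, $\sigma=\tau$, $\sigma=1/2$, and for each set $\mc{B}_k(S_i)$ determine which of these boundary loci it can meet.

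Next I would build a table, analogous to the one in the proof of Theorem~\ref{A2main}, giving the number of interior, edge, and corner points of each $\mc{B}_k(S_i)$. For $\mc{B}_k(S_2)$ the parameter set is $\{(s/a,t/a) : s,t\in\Z/a\}$, contributing roughly $a^2/8$ interior points together with boundary corrections controlled by $\gcd(a,2)$; symmetrically $\mc{B}_k(S_3)$ contributes roughly $b^2/8$ interior points with corrections involving $\gcd(b,2)$. For $\mc{B}_k(S_1)$ with parameter set $\{(s/a,t/b)\}$ the only Weyl group symmetries preserving this form are $(s,t)\mapsto(\pm s,\pm t)$, giving roughly $ab/4$ interior points, so that the three sets $\mc{B}_k(S_1),\mc{B}_k(S_2),\mc{B}_k(S_3)$ jointly account for the term $(a+b)^2/8=(a^2+2ab+b^2)/8$. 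Finally, for $\mc{B}_k(S_4)$ the relation $q^2\equiv 1\pmod c$ causes exactly four of the eight Weyl images of $(s/c,sq/c)$ to again have the form $(s'/c,s'q/c)$, giving roughly $c/4$ interior points; and for $\mc{B}_k(S_5)$ the relation $q^2\equiv -1\pmod d$ does the same through the combinations $(sq,\pm s)$, giving roughly $d/4$ interior points.

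The third step is to analyze overlaps. Because the relevant pairwise greatest common divisors among $q-1$, $q+1$, $q^2-1$, $q^2+1$ all divide $2$, the interiors of the five sets $\mc{B}_k(S_i)$ are pairwise disjoint, and any common element must lie on the boundary. I would then determine, for each edge and each corner, which of the five sets attain it, using the parametrizations directly: for example, points with $\sigma=0$ or $\tau=0$ enter through $\mc{B}_k(S_1)$, points on the diagonal $\sigma=\tau$ enter through $\mc{B}_k(S_2)$ and $\mc{B}_k(S_3)$, the corner $\varPhi(1/2,1/2)$ appears only when suitable parity conditions on $a$ or $b$ hold, and so on.

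Finally, to obtain the closed-form $\eta(k,q)$, I would isolate a short list of implications of the type used in Theorem~\ref{A2main}, for instance that $2\mathrel{|}k$ together with $2\mathrel{|}c$ forces specific relations among $\gcd(a,2),\gcd(b,2),\gcd(c,2),\gcd(d,2)$. The expected main obstacle is exactly this last step of careful boundary bookkeeping: the split between the cases $2\mathrel{|}ab$ and $2\nmid ab$ should reflect whether the ``halved'' corners $\varPhi(1/2,0)$ and $\varPhi(1/2,1/2)$ are actually hit by $\mc{B}_k(S_1)$, and checking that all double-counted edges and corners combine exactly into the coefficients $(a+b)/2+1/8$ and $(a+b)/4+1/4$ — rather than something close but incorrect — is the delicate point of the argument.
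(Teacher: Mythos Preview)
Your plan follows the paper's approach essentially step for step: classify by interior/edge/corner, tabulate the interior contributions of each $\mc{B}_k(S_i)$ with the rough counts $ab/4,\ a^2/8,\ b^2/8,\ c/4,\ d/4$, verify interior disjointness, then do boundary bookkeeping to extract $\eta(k,q)$. Two concrete points in your outline would actually fail as written, however.

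First, the disjointness argument is wrong as stated. It is not true that the pairwise greatest common divisors among $q-1,\ q+1,\ q^2-1,\ q^2+1$ all divide $2$: obviously $\gcd(q-1,q^2-1)=q-1$ and $\gcd(q+1,q^2-1)=q+1$. Interior disjointness is true, but for structural reasons: e.g.\ an element common to $\mc{B}_k(S_1)$ and $\mc{B}_k(S_4)$ would have to satisfy $\varPhi(\sigma,\tau)=\varPhi(\tau,\sigma)$ with $\sigma,\tau$ coming from the two different parametrizations, which forces it onto the boundary. The paper handles each pair by such ad hoc arguments, not by a blanket gcd bound.

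Second, your edge analysis misplaces the diagonal. You say the edge $\sigma=\tau$ is reached through $\mc{B}_k(S_2)$ and $\mc{B}_k(S_3)$; in fact the diagonal edges of $\mc{B}_k(S_2)$ and $\mc{B}_k(S_3)$ are already contained in $\mc{B}_k(S_4)$ (since $\varPhi(s/a,s/a)=\varPhi(s/a,qs/a)$), and the paper organizes the entire edge count around $\mc{B}_k(S_1)\cup\mc{B}_k(S_4)$, with $\mc{B}_k(S_2)$ and $\mc{B}_k(S_3)$ contributing no new edge points \emph{except} in the single exceptional case $2\mid k$ and $2\mid ab$, where one of $a,b$ is even and the other odd, so edge points such as $\varPhi(1/a,1/2)$ appear in $\mc{B}_k(S_2)$ but not in $\mc{B}_k(S_1)\cup\mc{B}_k(S_4)$. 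This exceptional edge contribution $\varepsilon(k,q)$ is precisely what generates the case split between $2\mid ab$ and $2\nmid ab$ in $\eta(k,q)$; without identifying it your bookkeeping will not close.
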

\begin{proof}
Our strategy is to separate the problem into three parts according to 
the Figure~\ref{fig:RB2}. We will consider points in the interior, on the edge 
and at the corners separately.

A point $\varPhi(\sigma,\tau)$, with $0\leq \sigma, \tau <1$, is a 
corner point if and only if it is of the form $\varPhi(0,0), \varPhi(1/2,1/2), 
\varPhi(0,1/2)$ or $\varPhi(0,1/2)$.  Note that the last two points are the 
same.

A point $\varPhi(\sigma,\tau)$ is an edge point if it is given by a pair
$(\sigma,\tau)$ that is on the boundary of $R_{A_2}$ except the corners. An 
edge point can be expressed in the form $\varPhi(\sigma,\sigma), 
\varPhi(0,\sigma)$  or $\varPhi(1/2,\sigma)$. There are four distinct 
expressions for each edge point whose components restricted modulo integers.

A point $\varPhi(\sigma,\tau)$ is an interior point if it is given by a pair
$(\sigma,\tau)$ that is in the interior of $R_{B_2}$. There are eight 
different representations given by I, II, ..., VIII. 

To ease the notation, we set $m'=\gcd(m,2)$, for each integer $m$. We have the 
following table:
\[\begin{array}{|l|c|}\hline
× & \text{Interior} \\ \hline
\mc{B}_k(S_1) & (a-a')(b-b')/4 \\ \hline
\mc{B}_k(S_2) & (a-a')(a-a'-2)/8 \\ \hline
\mc{B}_k(S_3) & (b-b')(b-b'-2)/8 \\ \hline
\mc{B}_k(S_4) & (c-c/a-c/b+c')/4  \\ \hline
\mc{B}_k(S_5) & (d-d')/4 \\ \hline
\end{array}\]

The elements in $\mc{B}_k(S_1)$ are of the form $\varPhi(s/a,t/b)$ for 
some integers $s$ and $t$. There are $ab$ pairs $(s/a,t/a)$ with $0\leq s<a$ and 
$0\leq t<b$. Even though there are eight symmetries, it is not possible to 
switch the first and second component unless their denominator are both 
divisors of two. As a result there are roughly $ab/4$ interior points in 
$\mc{B}_k(S_1)$. In order to find the precise number, we need to exclude pairs 
giving edge and corner points. The number of suitable pairs is
\[ ab - a'b - b'a + a'b' = (a-a')(b-b')\]
which is obtained by applying the inclusion and exclusion principle. Note that 
this number is divisible by four for each choice of $a$ and $b$. This justifies 
the first entry in the table. 

Secondly, we consider the interior points in $\mc{B}_k(S_2)$. These points are 
of the form $\varPhi(s/a,t/a)$ for some integers $0\leq s,t < a$. Unlike the 
previous case, switching $\sigma$ and $\tau$ is allowed here. As a result, there 
are roughly $a^2/8$ interior points in $\mc{B}_k(S_2)$. We find the following 
number of pairs which give rise to an interior point:
\[ (a-a')(a-a')-2(a -a')= (a-a')(a-a'-2).\]
In this expression, the first term counts the pairs $(s/a,t/a)$ excluding the 
ones which are of the form $(\pm s/a,0)$ and possibly the ones which are of the 
form $(\pm s/a,1/2)$ if $a'=2$. The second term excludes the pairs of the form 
$(s/a,s/a)$. As we shall expect, this integer is always divisible by $8$. This 
justifies the entry for $\mc{B}_k(S_2)$. The computation is similar for 
$\mc{B}_k(S_3)$.

Next, we consider the elements in $\mc{B}_k(S_4)$. They are of the form 
$\varPhi(s/c,sq/c)$ for some integer $0\leq s<c$. The multiplicative order of 
$q$ modulo $c$ is a divisor of two. On the other hand we are allowed to switch 
$s$ with $c-s$. Thus, there are roughly $c/4$ interior points in this set. Note 
that $sq \equiv s \pmod{c}$ if and only if $s$ is a multiple of $a$ or $b$. 
Thus, the number of pairs giving an interior point is 
\[ c-c/a-c/b+\gcd(c/a,c/b). \]
Note that $\gcd(c/a,c/b)=c'$.

The points in $\mc{B}_k(S_5)$ are of the form $\varPhi(s/d,qs/d)$ for some $0 
\leq s < d$. Clearly, the multiplicative order of $q$ modulo $d$ is a divisor 
of four. Thus we obtain four different expressions:
\[ \varPhi\left(\frac{s}{d},\frac{sq}{d}\right),  
\varPhi\left(\frac{sq}{d},\frac{-s}{d}\right), 
\varPhi\left(\frac{-s}{d},\frac{-sq}{d}\right), 
\varPhi\left(\frac{-sq}{d},\frac{s}{d}\right)\]
Note that $q^2\equiv -1 \pmod{d}$. Each one of the expressions are distinct 
unless $s/d$ is congruent $1/2$ modulo integers. This justifies the last entry 
in the table.

Now, we focus on the edge points. We first note that $\mc{B}_k(S_5)$ has no 
such point. We claim that the following holds unless $2|k$ and $2 | ab$:
\[ \text{Edge}( \mc{B}_k(S_2) \cup \mc{B}_k(S_3) ) \subseteq  \text{Edge}( 
\mc{B}_k(S_1) \cup \mc{B}_k(S_4) ). \]
To see this pick an edge point $\alpha=\varPhi(s/a,t/a)$ from $\mc{B}_k(S_2)$ 
with $s/a$ not being equal to $0$ or $1/2$. Without loss of generality, we can 
assume that $t/a\in\{0,1/2\}$ or $t=s$. If $t/a=0$, then $\alpha$ is an element 
of $\mc{B}_k(S_1)$. If $t/a=1/2$ and $2 \nmid ab$, then we obtain a 
contradiction. If $t/a=1/2$ and $2\nmid k$, then both $a$ and $b$ are even, 
and we conclude that $\alpha$ is an element of $\mc{B}_k(S_1)$. It remains to 
consider the case $t=s$. In this case, we have
\[ \alpha=\varPhi\left(\frac{s}{a},\frac{s}{a}\right) = 
\varPhi\left(\frac{s}{a},\frac{qs}{a}\right) =
\varPhi\left(\frac{sc/a}{c},\frac{qsc/a}{c}\right)  = 
\varPhi\left(\frac{\hat{s}}{c},\frac{q\hat{s}}{c}\right) \]
for some integer $\hat{s}$. Thus, $\alpha$ is an element of $\mc{B}_k(S_4)$. A 
similar argument holds for the edge points of $\mc{B}_k(S_3)$, too. This proves 
the claim, and we conclude that 
\[ \text{Edge}\left( \bigcup \mc{B}_k(S_i) \right) =  \text{Edge}( 
\mc{B}_k(S_1) \cup \mc{B}_k(S_4) ) \]
unless $2|k$ and $2 | ab$. 

Now let us consider the exceptional case, namely $2|k$ and $2 | ab$. In this 
case, the integers $a$ and $b$ have different parity, i.e. one of them is odd 
and the other one is even. If $a\geq4$ is even then $\varPhi(1/a,1/2)$ is an 
edge point in $\mc{B}_k(S_2)$ that is not in $\mc{B}_k(S_1) \cup \mc{B}_k(S_4)$. 
Similarly, if $b\geq4$ is even then $\varPhi(1/b,1/2)$ is an edge point in 
$\mc{B}_k(S_3)$ that is not in $\mc{B}_k(S_1) \cup \mc{B}_k(S_4)$. The number 
of elements in this exceptional case is given by
\[ \varepsilon(k,q) = \left\{ 
\begin{array}{cl}
(b-b')(2-a')/2 + (a-a')(2-b')/2 & \textnormal{if }\ 
2\mathrel{|}k\ \textnormal{ and }\ 2\mathrel{|}ab, \\ 
0 & \textnormal{if }\ 2\nmid k \ \textnormal{ or }\ 2 \nmid ab.
\end{array}
\right.\]

The next step is to count the number of edge points. We start with picking an 
edge point from $\mc{B}_k(S_1)$. It may be of the form $\varPhi(0,m/b)$ or 
$\varPhi(m/a,0)$. If $a'=2$ or $b'=2$, then there are more edge points with one 
of the components being $1/2$. Thus, the number of edge points in 
$\mc{B}_k(S_1)$ is given by
\[\frac{(a-a')b' + (b-b')a'}{2}.\]

Now, we consider the edge points of $\mc{B}_k(S_4)$. Recall that an element in 
$\mc{B}_k(S_4)$ is of the form $\alpha = \varPhi(s/c,sq/c)$. The point $\alpha$ 
is an edge point if and only if $s \equiv \pm sq \pmod{c}$. This is true if and 
only if $s(q\pm1) \equiv 0 \pmod{c}$.
If $s$ is a multiple of $a$ or $b$, then this condition is satisfied. However, 
in some cases $c=2ab$, and therefore there are more pairs giving edge points. 
In 
total, the number pairs satisfying the conditions $s(q\pm1) \equiv 0 \pmod{c}$ 
is equal to
\[ c/a + c/b - 2\gcd(c/a,c/b) \]
Note that $\gcd(c/a,c/b)=c'$. The number of edge points in $\mc{B}_k(S_4)$ is 
found by dividing this number by two.

We finally prove that the intersection $\mc{B}_k(S_1) \cap \mc{B}_k(S_4)$ has 
only the corner points. To see this, it is enough to observe that 
$\varPhi(\sigma,\tau) = \varPhi(\tau,\sigma)$ is true for any  edge point in $ 
\mc{B}_k(S_4)$. However, this is not the case for the edge points in $ 
\mc{B}_k(S_1)$. In summary, the number of all edge points is
\[\frac{(a-a')b' + (b-b')a'}{2} + \frac{c/a + c/b - 2c'}{2} + \varepsilon(k,q). 
\]
Here the epsilon term is added in the exceptional case, namely $2|k$ and $2 | 
ab$. In any other case, its contribution is zero. 

There are three corner points and these corner points are always present if 
either $a'=2$ or $b'=2$. If otherwise, i.e. $a'b'=1$, then the number of corner 
points is $c'$.

We use the following implications in order establish the formulas for 
$\eta(k,q)$ in the theorem:
\begin{enumerate}
 \item $2\nmid k$ or $2\nmid c \Rightarrow c=ab$ and $a'=b'=c'=d'$.
  \item $2\mathrel{|}k$ and $2\mathrel{|}c$ and $2\mathrel{|}ab \Rightarrow 
c=2ab$ and $d'=1$ and $a'+b'=3$.
 \item $2\mathrel{|}k$ and $2\mathrel{|}c$ and $2\nmid ab \Rightarrow c=2ab$ and 
$a' = b' = d' =1$.
\end{enumerate}

Now, the quantity $\eta(k,q)$ is computed by adding up the five formulas in the 
table for the interior points with the number of edge points and the 
number of corner points.
\end{proof}

\section{The case G2}
Unless otherwise stated or proved, the assertions of this section can be found 
in \cite{bivariate}. For the convenience of the reader, we will summarize the 
main notions. Then we will prove the main result of this section, see 
Theorem~\ref{G2main}. 

Let $\{\alpha_1, \alpha_2\}$ be a choice of simple roots for the Lie algebra 
$G_2$ with Cartan matrix $\sm{2}{-1}{-3}{2}$.
The transpose of this matrix transforms the fundamental weights into the 
fundamental roots. We have $\alpha_1 = 2\omega_1-3\omega_2$ and $\alpha_2 = 
-\omega_1+2\omega_2$. The orbit of $\omega_1$, under the action of the Weyl 
group, is $\{ \pm \omega_1, \pm(\omega_1-\omega_2), \pm(2\omega_1-\omega) \}$. 
Similarly, the orbit of $\omega_2$ is $\{ \pm \omega_2, \pm(3\omega_1-\omega_2), 
\pm(3\omega_1-2\omega_2) \}$. We set $\sigma:=-\omega_1+\omega_2$ and 
$\tau=2\omega_1-\omega_2$. With this new choice, the orbits appear simpler. More 
precisely, we have $ \{ \pm \sigma, \pm \tau, \pm(\sigma+\tau)  \} $ and  $ \{ 
\pm(2\sigma+\tau), \pm(\sigma+2\tau), \pm (\sigma-\tau) \}$. One can consider 
$\varPhi=(\varphi_1,\varphi_2)$ with
\begin{align*}
 \varphi_1(\sigma,\tau) = &\ e^{2\pi i \sigma}+e^{2\pi i \tau}+e^{2\pi 
i(\sigma+\tau)}+e^{-2\pi i\sigma}+e^{-2\pi i \tau}+e^{-2\pi 
i(\sigma+\tau)}, \\
 \varphi_2(\sigma,\tau) = &\ e^{2\pi i (2\sigma+\tau)}+e^{2\pi i 
(\sigma+2\tau)}+e^{2\pi 
i(\sigma-\tau)} \\ &\ + e^{-2\pi i (2\sigma+\tau)}+e^{-2\pi i 
(\sigma+2\tau)}+e^{-2\pi 
i(\sigma-\tau)}. 
\end{align*}

Observe that $\varPhi(\sigma,\tau)$ is equal to any one of the following twelve 
expressions below which are given by the elements of the Weyl group: 
\[\begin{array}{ll|ll|ll}
\text{I} &\varPhi(\sigma,\tau) &\text{V} 
&\varPhi(\sigma,-\sigma-\tau) &\text{IX} 
& \varPhi(\tau,-\sigma-\tau)\\
\text{II} & \varPhi(\tau,\sigma) &\text{VI} 
& \varPhi(-\sigma-\tau,\sigma) 
&\text{X} & \varPhi(-\sigma-\tau,\tau)\\
\text{III} & \varPhi(-\sigma,-\tau) &\text{VII} 
& \varPhi(-\sigma,\sigma+\tau) & 
\text{XI} & \varPhi(-\tau,\sigma+\tau)\\
\text{IV} & \varPhi(-\tau,-\sigma) &\text{VIII} 
& \varPhi(\sigma+\tau,-\sigma) 
&\text{XII} & \varPhi(\sigma+\tau,-\tau)
\end{array}\]
Under these symmetries, the region $0 \leq \sigma, \tau < 1$ is separated into 
twelve triangles which are mutually congruent to each other under the action of 
the Weyl group. We choose one of them as $R_{G_2}$. See Figure~\ref{fig:RG2}.

\begin{figure}[htbp]
    \centering
 \includegraphics[scale=0.8]{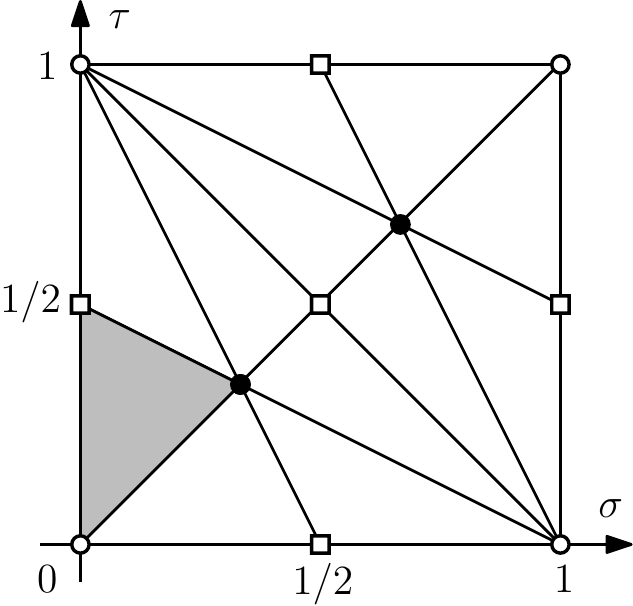}
    \caption{The fundamental region $R_{G_2}$.}
    \label{fig:RG2}
\end{figure}

The family of folding polynomials $\mc{G}_k$ satisfy the conditions 
\[\mc{G}_k(\varPhi(\sigma,\tau)) = \varPhi(k\sigma,k\tau).\]
We want to understand $\Fix(\mc{G}_q)$ in terms of $\varPhi(\cdot,\cdot)$. A 
fixed point of $\mc{G}_q$ is of the form $\varPhi(\sigma,\tau)$ where 
$(q\sigma,q\tau) \equiv w(\sigma,\tau) \pmod{\Z^2}$ for some $w$ in the Weyl 
group of $G_2$. Using this setup, it is not hard to show that $\Fix(\mc{G}_q)= 
\bigcup S_i$  where
\begin{align*}
 S_1 &= \left\{ \varPhi\left( \frac{s}{q-1}, \frac{t}{q-1} 
\right):s,t\in\Z \right\}, \\
 S_2 &= \left\{ \varPhi\left( \frac{s}{q^2-1}, \frac{sq}{q^2-1} 
\right):s\in\Z \right\}, \\
 S_3 &= \left\{ \varPhi\left( \frac{s}{q^2+q+1}, \frac{sq}{q^2+q+1} 
\right):s\in\Z \right\}, \\
 S_4 &= \left\{ \varPhi\left( \frac{s}{q+1}, \frac{t}{q+1} 
\right):s,t\in\Z \right\}, \\
 S_5 &= \left\{ \varPhi\left( \frac{s}{q^2-1}, \frac{s(-q)}{q^2-1} 
\right):s\in\Z \right\}, \\
 S_6 &= \left\{ \varPhi\left( \frac{s}{q^2-q+1}, \frac{s(-q)}{q^2-q+1} 
\right):s\in\Z \right\}.
\end{align*}

Let $K = \Q(\Fix(\mc{G}_q))$, a number field obtained by adjoining the 
solutions of $\mc{B}_q(x,y)=(x,y)$ to rational numbers. Let $\mf{p}$ be a prime 
ideal of $K$ lying over $p$. There is a one-to-one correspondence
\[ \F_q^2 \longleftrightarrow \Fix(\mc{G}_q) \]
obtained by reducing the elements on the right hand side modulo $\mf{p}$. 
Moreover this correspondence is compatible with the actions of $\bar{\mc{G}}_k$ 
and $\mc{G}_k$ on $\F_q^2$ and $\Fix(\mc{G}_q)$, respectively. Our purpose is 
to understand the size of
\[ \bar{\mc{G}}_k\left(\F_q^2\right)  = \bar{\mc{G}}_k\left( \overline{\bigcup 
S_i} \right) = \overline{ \bigcup \mc{G}_k(S_i) }.\]
Here $i$ runs from $1$ to $6$. In order to find the cardinality of the value 
set, it is enough to investigate the set of complex numbers $\bigcup 
\mc{G}_k(S_i)$.

\begin{theorem}\label{G2main}
Let $k$ be a positive integer. Set
\[a=\frac{q-1}{\gcd(q-1,k)},\ \ 
\tilde{a}=\frac{q+1}{\gcd(q+1,k)},\ \  
b=\frac{q^2-1}{\gcd(q^2-1,k)},\] 
\[c=\frac{q^2+q+1}{\gcd(q^2+q+1,k)}\quad \textnormal{and}\quad 
\tilde{c}=\frac{q^2-q+1}{\gcd(q^2-q+1,k)}.\]
Set $a'=\gcd(a,2)$ and $\tilde{a}'=\gcd(\tilde{a},2)$. Then the cardinality of 
the value set is 
\[\left|\bar{\mc{G}}_k\left(\F_q^2\right)\right|=\frac{a^2}{12}+\frac{\tilde{a}
^2}{12}+\frac{b}{2}+\frac{c }{6} + \frac{\tilde{c}}{6} +\eta(k ,q)\]
where $\eta(k,q)$ is given by
\[\begin{array}{|c|c|c|} \hline
\eta(k,q) & 3\nmid k \textnormal{ or } 3\nmid a\tilde{a} & 3\mathrel{|}k 
\textnormal{ and 
} 3\mathrel{|}a\tilde{a}\\ \hline
2 \nmid k \textnormal{ or } 2\nmid b & 0 & 1/3 \\ \hline
 2\mathrel{|}k \textnormal{ and } 2\mathrel{|}b & 
(a+\tilde{a})/2-1/(2a'\tilde{a}')  & 
(a+\tilde{a})/2 -1/(2a'\tilde{a}')+1/3 \\ \hline
 \end{array}\]
In particular if $\gcd(k,6)=1$, then $\eta(k,q)=0$.
\end{theorem}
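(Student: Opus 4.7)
The plan is to follow the template established in the proof of Theorem \ref{B2main}, decomposing the fundamental region $R_{G_2}$ into interior points, edge points, and corner points, and counting how many of each type occur in each of the six sets $\mc{G}_k(S_i)$. A generic interior point has $12$ distinct expressions I--XII, an edge point (fixed by a single reflection of the dihedral Weyl group) has $6$, and a corner point is invariant under a larger stabilizer. The target cardinality is obtained as the sum of interior, edge, and corner totals across the $S_i$, minus the overlaps.

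For the interior counts, for each $i$ I would determine the subgroup of the Weyl group that setwise stabilizes the parameterized family defining $S_i$. For $S_1$ the family is two-parameter with $a\mid q-1$, so $q$ acts trivially and no nontrivial Weyl element preserves the line, giving roughly $a^2/12$ interior points; $S_4$ is analogous with $\tilde a^2/12$. For $S_2$ we have $q^2\equiv 1\pmod b$, so expressions I, II, III, IV all yield a point of the form $(s'/b,s'q/b)$ with $s'\in\{s,sq,-s,-sq\}$, giving $\approx b/4$ interior points; the same count holds for $S_5$, and the two are disjoint because the parameterizations $(s,sq)$ and $(s,-sq)$ agree modulo $b$ only on edge/corner loci. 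For $S_3$, since $q^3\equiv 1\pmod c$, a short check using expression IX (which sends $(s/c,sq/c)$ to $(sq/c,sq^2/c)$ via $-s(1+q)\equiv sq^2\pmod c$) shows that six Weyl elements preserve the family, yielding $\approx c/6$; the analogous argument with $q^3\equiv -1\pmod{\tilde c}$ handles $S_6$. In each case I would then subtract the parameters landing on edges or corners to get exact counts, checking that the subtracted quantities make the formulas integers.

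Next I would verify pairwise disjointness of the interior points across the $\mc{G}_k(S_i)$ using that $\gcd(q-1,q^2+q+1)$, $\gcd(q+1,q^2-q+1)$, $\gcd(q^2-1,q^2\pm q+1)$, and $\gcd(q^2+q+1,q^2-q+1)$ are all small divisors of $3$, so any common element must lie on an edge or corner. Then, mirroring the $B_2$ argument, I would show that the edge points of $\bigcup\mc{G}_k(S_i)$ are carried by the ``rectangular'' sets $\mc{G}_k(S_1)\cup\mc{G}_k(S_4)\cup\mc{G}_k(S_2)\cup\mc{G}_k(S_5)$ except in the exceptional case $2\mid k$ together with additional parity constraints on $b$, where $S_3$ or $S_6$ contributes extra edge points that need a separate $\varepsilon(k,q)$ correction. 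Corner counts would be handled by direct enumeration (the corners are among $\varPhi(0,0)$ and the points of order $2$ or $3$ in the $\sigma,\tau$ coordinates), with multiplicities depending on $a'$, $\tilde a'$, and on $3\mid a\tilde a$. Finally I would invoke implications of the form ``$2\mid k$ and $2\mid b\Rightarrow b=a\tilde a$ with $c=2a\tilde a$'' and ``$3\mid k$ and $3\mid a\tilde a\Rightarrow 3\nmid c\tilde c$'' to collapse all the parity/divisibility subcases into the four entries of the $\eta(k,q)$ table.

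The main obstacle will be the edge/corner bookkeeping, and in particular the unusual correction term $1/(2a'\tilde a')$: this fractional quantity signals that distinct exceptional configurations (depending jointly on whether each of $a$ and $\tilde a$ is even or odd) contribute overlapping or missing edge points that do not all cancel against each other as in the $B_2$ case. Correctly tracking, for each combination of $(a',\tilde a')\in\{1,2\}^2$, which of $\mc{G}_k(S_2),\mc{G}_k(S_3),\mc{G}_k(S_5),\mc{G}_k(S_6)$ contributes edge points not already in $\mc{G}_k(S_1)\cup\mc{G}_k(S_4)$ — and reconciling this with the interior formulas — is the step where the proof must be most delicate.
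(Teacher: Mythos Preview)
Your overall architecture matches the paper's: decompose $R_{G_2}$ into interior, edge, and corner points and count each type in each $\mc{G}_k(S_i)$. The interior counts you sketch and the pairwise disjointness arguments via small gcd's are essentially what the paper does.

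The edge-point analysis, however, imports the wrong template from the $B_2$ proof and this is a genuine gap. You anticipate that $\mc{G}_k(S_3)$ or $\mc{G}_k(S_6)$ will contribute ``extra'' edge points in an exceptional parity case, forcing an $\varepsilon(k,q)$ correction. In fact $\mc{G}_k(S_3)$ and $\mc{G}_k(S_6)$ contain \emph{no} edge points at all: for $\varPhi(s/c,sq/c)$ to lie on an edge one needs $s\equiv \pm sq\pmod c$, and since $\gcd(q\pm1,c)\mid 3$ this forces $s$ to be a multiple of $c/\gcd(c,3)$, which yields only the corner $\varPhi(1/3,1/3)$ or $\varPhi(0,0)$. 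Moreover the edge inclusion runs in the \emph{opposite} direction from $B_2$: here one has
\[
\text{Edge}\bigl(\mc{G}_k(S_1)\cup\mc{G}_k(S_4)\bigr)\subseteq\text{Edge}\bigl(\mc{G}_k(S_2)\cup\mc{G}_k(S_5)\bigr)
\]
unconditionally, so the full edge count is read off from $S_2$ and $S_5$ alone, with no $\varepsilon$ term.

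Because of this, the fractional correction $1/(2a'\tilde a')$ does not come from any $S_3,S_6$ edge contribution. It arises instead from the implication ``$2\mid k$ and $2\mid b\Rightarrow b=2a\tilde a$'' (not $b=a\tilde a$ as you wrote, and $c$ does not enter this implication), together with a secondary split according to whether $a'+\tilde a'$ equals $2$ or $3$. These two subcases yield $\eta=(a+\tilde a)/2-1/2$ and $\eta=(a+\tilde a)/2-1/4$ respectively, and the paper packages them into the single expression $(a+\tilde a)/2-1/(2a'\tilde a')$. Your proposal misidentifies the mechanism behind this term, and the edge bookkeeping you outline would not produce the correct formula without being redone along these lines.
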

\begin{proof}
Our strategy is to separate the problem into three parts according to 
the Figure~\ref{fig:RG2}. We will consider points in the interior, on the edge 
and at the corners separately.

A point $\varPhi(\sigma,\tau)$, with $0 \leq \sigma, \tau <1$, is a corner 
point if and only if it can expressed in the form $\varPhi(0,0), 
\varPhi(1/3,1/3)$ or $\varPhi(0,1/2)$. The first corner point $\varPhi(0,0)$ 
has a unique representation if we restrict $\sigma$ and $\tau$ to the region $0 
\leq \sigma,\tau <1$. However, this is not the case for the others. We have 
$\varPhi(1/3,1/3) = \varPhi(2/3,2/3)$ and $\varPhi(0,1/2) = \varPhi(1/2,0) = 
\varPhi(1/2,1/2)$.

A point $\varPhi(\sigma,\tau)$ is an edge point if it is given by a pair
$(\sigma,\tau)$ that is on the boundary of $R_{G_2}$ except the corners. An 
edge point can be expressed in the forms $\varPhi(\sigma,\sigma)$ or 
$\varPhi(0,\sigma)$. There are six different expressions for each edge point 
if we restrict $(\sigma, \tau)$ to the region $0 \leq \sigma,\tau <1$. 

A point $\varPhi(\sigma,\tau)$ is an interior point if it is given by a pair
$(\sigma,\tau)$ that is in the interior of $R_{G_2}$. There are exactly twelve 
distinct representations given by I, II, ..., XII, when the components 
are restricted modulo integers. 

To ease the notation, we set $m'=\gcd(m,2)$ and $m''=\gcd(m,3)$ for any integer 
$m$. We have the following table:
\[\begin{array}{|l|c|} \hline
× & \text{Interior}\\ \hline
\mc{G}_k(S_1) & (a^2-6a+3a'+2a'')/12 \\ \hline
\mc{G}_k(S_2) & (b-b/a-b/\tilde{a}+(b/a,b/\tilde{a}))/4  \\ \hline
\mc{G}_k(S_3) & (c-c'')/6 \\ \hline
\mc{G}_k(S_4) & (\tilde{a}^2-6\tilde{a} + 3\tilde{a}' + 2\tilde{a}'')/12\\ 
\hline
\mc{G}_k(S_5) & (b-b/a-b/\tilde{a}+(b/a,b/\tilde{a}))/4\\ \hline
\mc{G}_k(S_6) & (\tilde{c}-\tilde{c}'')/6 \\ \hline
\end{array}\]

The elements in $\mc{G}_k(S_1)$ are of the form $\varPhi(s/a,t/a)$ for 
some integers $s$ and $t$. There are $a^2$ pairs $(s/a,t/a)$ with $0\leq s,t < 
a$ and as a result there are roughly $a^2/12$ interior points in 
$\mc{A}_k(S_1)$. In order to find the precise number, we need to exclude the 
pairs giving edge and corner points. The number of pairs, which give interior 
points, is equal to
\[ a^2-6(a-a'-a''+1)-3(a'-1)-4(a''-1)-1 = a^2-6a+3a'+2a''.\]
In order to see this, we use the following idea: If $s/a=1/2,1/3$ or $2/3$, 
then we consider those cases separately. This explains the second 
term on the left. If $a'=2$, then we have to exclude only three pairs, 
namely $(1/2,0),(0,1/2)$ and $(1/2,1/2)$. This explains the third term, namely 
$3(a'-1)$. If $a''=3$, then we have to exclude eight pairs, namely $(\pm 
1/3,\pm 1/3),(0,\pm1/3)$ and $(\pm1/3,0)$. This explains the fourth term. The 
fifth and the last term $-1$ is for excluding the pair $(0,0)$.

Secondly, we consider the interior points in $\mc{G}_k(S_2)$. The elements in 
$\mc{G}_k(S_2)$ are of the form $\varPhi(s/b,sq/b)$ for some integer $s$. There 
are $b$ such pairs with $0\leq s < b$. An interior point of this form must have 
four distinct representations:
\[\varPhi\left(\frac{s}{b}, \frac{sq}{b}\right) = \varPhi\left(\frac{sq}{b}, 
\frac{s}{b}\right)= \varPhi\left(\frac{-s}{b}, \frac{-sq}{b}\right)= 
\varPhi\left(\frac{-sq}{b}, \frac{-s}{b}\right).\]
Here the numerators are considered modulo $b$. Note that $\varPhi(a/b,qa/b)$ 
is equal to $\varPhi(a/b,0)$ and it is an edge point. To see this, note that 
\[a+qa \equiv a(q+1) \equiv 0 \pmod{b}.\]
There are $b/a$ distinct pairs $(ma/b,mqa/b)$ modulo integers for $m=1,\ldots, 
b/a$ which have the same property. Similarly, there are $b/\tilde{a}$ distinct 
pairs $(m\tilde{a}/b,mq\tilde{a}/b)$ modulo integers for $m=1,\ldots, b/a$ 
which give edge points. Applying the inclusion and exclusion principle, we 
justify the second row of the table.

The third row, namely the number of interior points in $\mc{G}_k(S_3)$, is 
relatively easier to compute. The elements in $\mc{G}_k(S_3)$ are of the form 
$\varPhi(s/c,sq/c)$ for some integer $s$. There are six different pairs 
giving the same point, namely
\[\pm\left(\frac{s}{c}, \frac{sq}{c}\right),\ \pm\left(\frac{sq}{c}, 
\frac{sq^2}{c}\right),\ \pm\left(\frac{sq^2}{c}, \frac{s}{c}\right).\]
Here the numerators are considered modulo $c$. The corner point
$\varPhi(1/3,1/3)$ can be expressed in the form $\varPhi(s/c,sq/c)$ if and only 
if $c''=3$. Thus, there are $c-c''$ pairs which give an interior point. 

The other half of the table is obtained in a similar fashion. We also note 
that the set of interior points of $\mc{G}_k(S_i)$ are pairwise disjoint. We 
will explain this for one pair and omit the others. Let us pick a point
$ \alpha \in \mc{G}_k(S_2) \cap \mc{G}_k(S_5)$. 
We will show that $\alpha$ is either an edge point or a corner point. We have
\[\alpha = \varPhi\left(\frac{s}{b}, \frac{sq}{b}\right) = 
\varPhi\left(\frac{t}{b}, \frac{t(-q)}{b}\right)\]
for some integers $s$ and $t$. Without loss of generality, we can assume that 
$s\equiv t \pmod{ b}$. It follows that $sq$ is either congruent to $s(-q)$, or 
congruent to $-s+sq$ modulo $b$. This is obtained by either I or V, 
respectively. In either case we have $s=0$, and therefore $\alpha$ is the corner 
point $\varPhi(0,0)$. This finishes the discussion for the interior points.

Now, we focus on the edge points. We first note that $\mc{G}_k(S_3)$ and 
$\mc{G}_k(S_6)$ have no such points. Pick an edge point $\varPhi(s/a,t/a)$ 
from $\mc{G}_k(S_1)$. Without loss of generality, we can assume that $t=s$ or 
$t=0$. In the former case, we have $\varPhi(s/a,s/a)=\varPhi(s/a,qs/a)$, an 
element of $\mc{G}_k(S_2)$. In the latter case, we have 
$\varPhi(s/a,0)=\varPhi(s/a,-qs/a)$, an 
element of $\mc{G}_k(S_5)$. A similar argument holds for the edge 
points of $\mc{G}_k(S_4)$, too. Thus, we conclude that
\[ \text{Edge}( \mc{G}_k(S_1) \cup \mc{G}_k(S_4) ) \subseteq  \text{Edge}( 
\mc{G}_k(S_2) \cup \mc{G}_k(S_5) ), \]
and therefore
\[ \text{Edge}\left( \bigcup \mc{G}_k(S_i) \right) =  \text{Edge}( 
\mc{G}_k(S_2) \cup \mc{G}_k(S_5) ). \]
Pick an edge point from $\mc{G}_k(S_2)$. It is of the form either 
$\varPhi(ma/b,mqa/b)$ with $m=1,\ldots,b/a$, or $\varPhi(m\tilde{a}/b, 
mq\tilde{a}/b)$ with $m=1,\ldots,b/\tilde{a}$. If $ma/b$  or 
$m\tilde{a}/b$ is equal to one of $1/2,1/3$ or $2/3$, then we consider those 
cases separately. In this separate case, the only possibility is the edge point 
$\varPhi(1/3,2/3)$. Thus the number of edge points in $\mc{G}_k(S_2)$, other 
than $\varPhi(1/3,2/3)$, is precisely  
\[ \frac{b/a-(b/a)' - (b/a)''+ 1 + b/\tilde{a} - (b/\tilde{a})' - 
(b/\tilde{a})''+1 }{2} \]
The same value holds for $\mc{G}_k(S_5)$, too. Note that $\varPhi(1/3,2/3)$ is 
present if and only if $a''+\tilde{a}''=4$. Thus the number of all edge points 
is equal to
\begin{equation}\label{g2edge}
 b/a-(b/a)' - (b/a)''+ 1 + b/\tilde{a} - (b/\tilde{a})' - 
(b/\tilde{a})''+1 + \frac{a''+\tilde{a}''-2}{2}.
\end{equation}
This finishes the discussion for the edge points.

There are three corner points, namely $\varPhi(0,0), \varPhi(0,1/2)$ and 
$\varPhi(1/3,1/3)$. The number of corner points that are present in $\bigcup 
\mc{G}_k(S_i)$ is
\begin{equation}\label{g2corner}
1 + ((a\tilde{a}b)'-1)+(a''+\tilde{a}''-2)/2.
\end{equation}
In this sum with three terms, the middle term is equal to one if and only if 
$\varPhi(0,1/2)$ is present, otherwise it is zero. Similarly 
$(a''+\tilde{a}''-2)/2$ is equal to one if and only if $\varPhi(1/3,1/3)$ is 
present, otherwise it is zero. This finishes the discussion for the corner 
points.

We use the following implications in order establish the formulas for 
$\eta(k,q)$ in the theorem:
\begin{enumerate}
 \item $2 \nmid k$ or $2\nmid b\Rightarrow b=a\tilde{a}$ and 
$a'=\tilde{a}'=(b/a)'=(b/\tilde{a})'=\gcd(b/a,b/\tilde{a})$
 \item $2 \mathrel{|} k$ and $2 \mathrel{|} b\Rightarrow 
b=2a\tilde{a}$ and $(b/a)'=(b/\tilde{a})'=2$ and 
$\gcd(b/a,b/\tilde{a})=2$.
 \item $3 \nmid k$ or $3\nmid a\tilde{a}\Rightarrow a''=c''$ and 
$\tilde{a}''=\tilde{c}''$
 \item $3 \mathrel{|} k$ and $3 \mathrel{|} a\tilde{a}\Rightarrow 
c''=\tilde{c}''=1$ and $(a\tilde{a})''+1=a''+\tilde{a}''=4$.
\end{enumerate}

Now, the quantity $\eta(k,q)$ is computed by adding up the six formulas in the 
table for the interior points with the expression for the edge 
points, see the equation~(\ref{g2edge}), and the expression for the corner 
points, see the equation~(\ref{g2corner}). There is a subtle point if (2) 
holds; if the sum $a'+\tilde{a}'$ is equal to three, then $\eta(k,q) = 
(a+\tilde{a})/2-1/4$, and if the sum $a'+\tilde{a}'$ is two, then $\eta(k,q) = 
(a+\tilde{a})/2-1/2$. In order to keep table for $\eta(k,q)$ as simple as 
possible, we write these two different expressions in one single formula. More 
precisely, we write $\eta(k,q) = (a+\tilde{a})/2-1/(2a'\tilde{a}')$ which covers 
both cases.
\end{proof}

{\small
\def\refname{References}
\newcommand{\etalchar}[1]{$^{#1}$}

\end{document}